\documentclass[10pt]{amsart}

\usepackage[latin1]{inputenc}
\usepackage{mathptmx}
\usepackage{amsmath}
\usepackage{dsfont}
\usepackage{amsfonts}
\usepackage{amssymb}
\usepackage{amsthm}
\usepackage{mathrsfs}
\usepackage{MnSymbol}
\usepackage{color,soul}
\usepackage[text={4.5in,7.2in},centering]{geometry}
\usepackage{gensymb}
\usepackage{caption}
\theoremstyle{definition}
\newcounter{dummy} 
\numberwithin{dummy}{section}
\newtheorem{thm}[dummy]{Theorem}

\newtheorem{defin}[dummy]{Definition}

\newtheorem{cor}[dummy]{Corollary}
\newtheorem{prop}[dummy]{Proposition}

\DeclareMathAlphabet\mathbb{U}{msb}{m}{n}


\let\oldhat\hat                

\renewcommand{\hat}[1]{\oldhat{\mathbf{#1}}}

\usepackage{calc}
\usepackage{tikz}

\usetikzlibrary{decorations.markings}
\usetikzlibrary{positioning,automata}
\usetikzlibrary{shapes}
\usetikzlibrary{arrows}
\usepackage{verbatim}
\tikzset{
	>=stealth',
	vertex/.style={
		circle,
		draw=#1,
		fill=#1,
		inner sep = 2pt,
		outer sep = 0pt,
		text centered},
	edge/.style={
		-,
		thin,
		draw=black,
		},
	text style/.style={
		sloped,
		text=black,
		font=\normalsize,
		above}
	}

\tikzset{every loop/.style={min distance=10mm,in=45,out=135,looseness=0}}

\title{Edge Cut Domination, Irredundance, and Independence in Graphs }

\author{Todd Fenstermacher, Stephen Hedetniemi, Renu Laskar \\ Clemson University}

\begin{document}
	
	\begin{abstract}  An edge dominating set $F$ of a graph $G=(V,E)$ is an \textit{edge cut dominating set} if the subgraph $\langle V,G-F \rangle$ is disconnected. The \textit{edge cut domination number} $\gamma_{ct}(G)$ of $G$ is the minimum cardinality of an edge cut dominating set of $G.$ In this paper we study the edge cut domination number and investigate its relationships with other parameters of graphs. We also introduce the properties edge cut irredundance and edge cut independence.
		
	\end{abstract}
	
	\maketitle
	
	
\section {Introduction}

Let $G=(V,E)$ be a graph of order $n=|V|$ and size $m=|E|.$ Here we often take $G$ to be a connected simple graph. The \textit{open neighborhood} of a vertex $v \in V$ is the set $N(v) = \{u \text{ }|\text{ } uv \in E\},$ while the \textit{closed neighborhood} of $v$ is the set $N[v] = N(v) \bigcup \{v\}.$ Similarly, the \textit{closed neighborhood} of a set $S \subseteq V$ is $N[S] = \bigcup_{v \in S}N[v].$

A \textit{dominating set} is a set $S\subseteq V$ for which $N[S] = V.$ The \textit{domination number} $\gamma(G)$ equals the minimum cardinality over all dominating sets in $G,$ and a dominating set of cardinality $\gamma(G)$ is called a $\gamma$-set. A dominating set $S$ is called a \textit{split dominating set} if the induced subgraph $\langle V-S\rangle$ is either disconnected or $K_1.$ The \textit{split domination number} $\gamma_s(G)$ is the minimum cardinality of a split dominating set. This parameter was introduced by Kulli and Janakiram in 1997 \cite{Kulli}.

Similar parameters can be defined in terms of a set of edges. A set $F\subseteq E$ is called an \textit{edge dominating set} if every edge not in $F$ is adjacent to an edge in $F,$ that is, has a vertex in common with an edge in $F.$ The \textit{edge domination number} $\gamma'(G)$ is the minimum cardinality over all edge dominating sets of $G,$ and an edge dominating set of cardinality $\gamma'(G)$ is called a $\gamma'$-set. An edge dominating set $F$ is called an \textit{edge cut dominating set} if the subgraph $\langle V,E-F \rangle$ is disconnected. The \textit{edge cut domination number} $\gamma_{ct}(G)$ is the minimum cardinality over all edge cut dominating sets of $G,$ and an edge cut dominating set of cardinality $\gamma_{ct}(G)$ is called a $\gamma_{ct}$-set.

In 2001, Neeralagi and Nayak first introduced the edge cut domination number \cite{Nayak}; however, they named this parameter the \textit{split edge domination number}. We adopt the term edge cut domination number to indicate more clearly that a $\gamma_{ct}$-set is an edge dominating set containing an edge cut, and has nothing to do with a possible operation of splitting edges.

The \textit{edge connectivity} of a connected graph $G,$ denoted $\lambda(G),$ equals the minimum cardinality of a set of edges $F\subseteq E$ such that $\langle V,E-F \rangle$ is disconnected.  Such a set is called a $\lambda$-set, or a \textit{minimum edge cut.} We note that for a connected graph $G,$ both a $\gamma'$-set and $\lambda$-set exist. Thus, a $\gamma_{ct}$-set exists for any (connected) graph $G.$ \\

\section{Values and Bounds}

In this section we establish the value of the edge cut domination number for various classes of graphs, and we establish a variety of inequalities between this parameter and other known parameters of graphs. The following inequalities are consequences of the definitions of the given parameters, and are stated without proof.

\begin{prop}
	For any connected graph $G,$
	\[  (i) \text{ } \gamma'(G) \leq \gamma_{ct}(G) \hspace{2ex} \text{and} \hspace{2ex} (ii) \text{ } \lambda(G) \leq \gamma_{ct}(G). \]
\end{prop}	

Most of the following statements were previously observed by Neeralagi and Nayak \cite{Nayak}, although without proof or comment. Here we provide proofs of these statements. 

\begin{prop} \text{ } \\
(i) For the complete graph $K_n$ of order $n,$ $\gamma_{ct}(K_n) = n-1.$
 \\  (ii) For the cycle $C_n$ of order $n \geq 4,$ $\gamma_{ct}(C_n) = \lceil \frac{n}{3} \rceil .$ \\ (iii) For the wheel $W_n$ of order $n+1,$ $\gamma_{ct}(W_n) = \lceil \frac{n-4}{3}\rceil + 3.$ \\ (iv) For the complete bipartite graph $K_{m,n},$ with $m\geq n,$ $\gamma_{ct}(K_{m,n}) = n.$ \\ 
 (v) For any tree $T,$ $\gamma_{ct}(T) = \gamma'(T).$ \\
 (vi) For the path $P_n$ of order $n,$ $\gamma_{ct}(P_n) = \lceil \frac{n-1}{3}\rceil.$ 
\end{prop}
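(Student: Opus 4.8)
The plan is to reduce the edge cut domination number of $P_n$ to its edge domination number and then to compute the latter by a direct counting argument. Since every path is a tree, part (v) of this proposition gives $\gamma_{ct}(P_n) = \gamma'(P_n)$ immediately; alternatively, one can observe directly that every edge of a path is a bridge, so removing any nonempty set of edges disconnects the path, and hence any minimum edge dominating set of $P_n$ is automatically an edge cut dominating set. Either way it suffices to prove that $\gamma'(P_n) = \lceil \frac{n-1}{3} \rceil$.

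To set up the count, I would label the vertices $v_1, v_2, \ldots, v_n$ and write $e_i = v_i v_{i+1}$ for the edges, $1 \le i \le n-1$, so that $P_n$ has $m = n-1$ edges. The key structural remark is that each edge $e_i$ is adjacent only to $e_{i-1}$ and $e_{i+1}$ (whichever exist); that is, the edges of $P_n$, under the adjacency relation used by edge domination, themselves form a path. This is what makes both bounds transparent.

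For the lower bound I would argue by counting: a single edge $e_i$ dominates only itself and its at most two neighbors $e_{i-1}, e_{i+1}$, so each $e_i \in F$ dominates at most three edges. Consequently an edge dominating set $F$ with $|F| = k$ dominates at most $3k$ edges, and since $F$ must dominate all $m = n-1$ edges we need $3k \ge n-1$, whence $k \ge \lceil \frac{n-1}{3} \rceil$.

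For the matching upper bound I would exhibit an edge dominating set of exactly this size. Partition the ordered list $e_1, \ldots, e_{n-1}$ into consecutive blocks of three (with one shorter trailing block when $3 \nmid (n-1)$), and from each full block $\{e_j, e_{j+1}, e_{j+2}\}$ select the middle edge $e_{j+1}$, choosing any single edge from a short trailing block. The middle edge of a full block dominates all three edges of that block, and the chosen edge of the trailing block dominates the one or two edges there, so the selected set is edge dominating; its cardinality is the number of blocks, namely $\lceil \frac{n-1}{3} \rceil$. Combining the two bounds gives $\gamma'(P_n) = \lceil \frac{n-1}{3} \rceil$, and with the reduction above, $\gamma_{ct}(P_n) = \lceil \frac{n-1}{3} \rceil$. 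The only points demanding care are the treatment of the short trailing block in the construction and the small-order cases $n \le 3$, which one simply checks against the degenerate paths directly; the argument is otherwise routine.
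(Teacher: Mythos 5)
There is a genuine gap, but it is one of scope rather than of logic. The statement you were asked to prove is a six-part proposition covering $K_n$, $C_n$, the wheel $W_n$, $K_{m,n}$, arbitrary trees $T$, and paths $P_n$; your proposal addresses only part (vi), the path, and moreover leans on part (v) as an available fact ("part (v) of this proposition gives $\gamma_{ct}(P_n) = \gamma'(P_n)$ immediately"). Nothing in your write-up establishes (i)--(iv): for $K_n$ one needs the lower bound $\lambda(K_n) = n-1 \leq \gamma_{ct}(K_n)$ together with the star of a vertex as a matching construction; for $C_n$ one combines $\lambda(C_n)=2$ with $\gamma'(C_n) = \lceil \frac{n}{3}\rceil$; for $W_n$ one needs a construction (the three spokes and rim edges at a degree-$3$ vertex, plus an edge dominating set of the remaining $P_{n-3}$) and an argument for its optimality; and for $K_{m,n}$ one uses $\lambda(K_{m,n}) = n$. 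None of these reduce to the path computation you carried out, since these graphs have no bridges and the cut condition genuinely costs something there (e.g.\ $\gamma'(K_n) = \lfloor n/2 \rfloor$ while $\gamma_{ct}(K_n) = n-1$).

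What you did prove is correct, and in one respect stronger than the paper's treatment: the paper disposes of (vi) in one line by citing (v) and the known value $\gamma'(P_n) = \lceil \frac{n-1}{3}\rceil$, whereas you actually derive that value, with a sound lower bound (each edge dominates at most three edges of the edge-path) and a sound block construction for the upper bound, including the short trailing block. Note also that your "alternative" observation---every edge of $P_n$ is a bridge, so any edge dominating set is automatically an edge cut dominating set---is exactly the paper's proof of (v), and it generalizes verbatim from paths to arbitrary trees; had you stated it for trees, you would have disposed of (v) as well. To repair the proposal, keep your part (vi) argument, promote the bridge observation to all trees to get (v), and supply the four edge-connectivity-based arguments sketched above for (i)--(iv).
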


\begin{proof}
\textit{(i)} We have $\lambda(K_n) = n-1 \leq \gamma_{ct}(K_n).$ Note that the set of all edges incident to a given vertex is an edge cut dominating set of cardinality $n-1.$ Hence $\gamma_{ct}(K_n) = n-1.$ 

\textit{(ii)} For $n \geq 4,$ it is clear that $\lambda(C_n) = 2.$ Moreover, $\gamma'(C_n) = \lceil \frac{n}{3} \rceil \geq 2.$ It follows that $\gamma_{ct}(C_n) = \gamma'(C_n) = \lceil \frac{n}{3} \rceil.$ 

\textit{(iii)} For a given vertex of degree $3$, select the three edges incident to this vertex so that the resulting subgraph is disconnected. Note that dominating the remaining edges is equivalent to dominating $P_{n-3}.$ Since $\gamma'(P_{n-3}) = \lceil \frac{n-4}{3} \rceil,$ we have an edge cut dominating set of cardinality $\lceil \frac{n-4}{3} \rceil + 3.$ Moreover, such a set is a minimum cardinality edge cut dominating set. 

\textit{(iv)} The set of all edges incident to a given vertex in the partition of $K_{m,n}$ with $m$ vertices is an edge cut dominating set of cardinality $n.$ Since $\lambda(K_{m,n}) = n,$ it follows that $\gamma_{ct}(K_{m,n}) = n.$

\textit{(v)} Note that every edge of $T$ is a cut edge. Hence every $\gamma'$-set disconnects $T.$ It follows that $\gamma_{ct}(T) = \gamma'(T).$ 

\textit{(vi)} Since $P_n$ is a tree, we have $\gamma_{ct}(T) = \gamma'(T) = \lceil\frac{n-1}{3}\rceil.$

\end{proof}

The \textit{edge covering number} $\alpha_1(G)$ is the minimum cardinality of a set $F$ of edges such that every vertex is incident with at least one edge in $F.$ The \textit{matching number} $\beta_1(G)$ is the maximum cardinality over all independent edge sets.

\begin{prop}
For any connected graph G with size $m>1,$ \[ \gamma_{ct}(G) \leq m-\beta_1(G).  \]	
\end{prop}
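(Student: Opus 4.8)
The plan is to exhibit an explicit edge cut dominating set of the required cardinality, namely the complement of a maximum matching. Let $M$ be a maximum matching of $G$, so that $|M| = \beta_1(G)$, and set $F = E - M$, which has cardinality $m - \beta_1(G)$. I would prove that $F$ is an edge cut dominating set; this at once yields $\gamma_{ct}(G) \leq |F| = m - \beta_1(G)$.

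First I would check that $F$ is an edge cut, i.e.\ that the subgraph $\langle V, E - F \rangle = \langle V, M \rangle$ is disconnected. Since $G$ is connected with $m > 1$, its order satisfies $n \geq 3$, while the matching $M$ contributes at most $\lfloor n/2 \rfloor$ edges, and $\lfloor n/2 \rfloor < n - 1$ for $n \geq 3$. As any connected graph on $n$ vertices needs at least $n-1$ edges, the spanning subgraph $\langle V, M \rangle$ cannot be connected, so $F$ is an edge cut.

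Second I would verify that $F$ is an edge dominating set, which reduces to showing that every edge of $M$ is adjacent to some edge of $F = E - M$ (the edges outside $F$ are exactly those of $M$). Fix $e = uv \in M$. I would argue that at least one endpoint, say $u$, has degree at least $2$: if both $u$ and $v$ had degree $1$, then $\{u,v\}$ together with the edge $e$ would form an entire component, forcing $G = K_2$ and $m = 1$, contradicting $m > 1$. Given such a $u$, a second edge incident to $u$ exists; it is not in $M$ since a matching uses at most one edge per vertex, so it lies in $F$ and is adjacent to $e$.

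The argument is short, and the only delicate point is the degenerate case in the domination step, where a matched edge might have both endpoints of degree $1$. Connectedness together with the hypothesis $m > 1$ is precisely what rules this out, so I expect that to be the one place requiring explicit justification rather than the edge-cut verification, which follows from a straightforward edge count.
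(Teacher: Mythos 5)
Your proof is correct and takes essentially the same approach as the paper: both exhibit the complement $E-M$ of a (maximum) matching $M$ and show it is an edge cut dominating set, giving $\gamma_{ct}(G) \leq m - \beta_1(G)$. Your write-up simply makes explicit two details the paper leaves implicit, namely the edge count showing $\langle V, M\rangle$ is disconnected and the degree argument ruling out an isolated matched edge.
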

	
\begin{proof}
	Let $F$ an independent set of edges in $G,$ that is no two edges in $F$ have a vertex in common. Now consider the complement of $F,$ $E-F.$ Since $F$ is an independent set of edges, we see that the removal of the edge set $E-F$ from $G$ disconnects $G.$ Moreover, $E-F$ is an edge dominating set. For if not, then there exists some edge $e \in F$ such that $e$ is not adjacent to any edge in $E-F.$ But of course $e$ is not adjacent to any edge in $F.$ But this contradicts the fact $G$ is a connected graph. Hence $E-F$ is an edge cut dominating set. Hence $\gamma'_{ct}(G) \leq |E - F| = m - |F|.$ 
	
	Since $\beta_1(G)$ is the maximum cardinality over all independent egde sets, it follows that $\gamma'_{ct}(G) \leq m - \beta_1(G).$ 
\end{proof}

The following corollary follows from the fact that for a connected graph $G=(V,E),$ $\alpha_1(G) + \beta_1(G) = |V|.$
\begin{cor}
	For any tree $T$ of order $n,$ 
	\[ \gamma_{ct}(G) \leq n - \beta_1(G) - 1 = \alpha_1(G) - 1.	\]
\end{cor}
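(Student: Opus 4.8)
The plan is to obtain the corollary as an immediate specialization of the preceding Proposition. The only structural input needed is the standard fact that a tree $T$ of order $n$ has exactly $m = n-1$ edges; this is precisely what distinguishes a tree from a general connected graph for the purposes of this bound, and it is what lets me trade the size $m$ for the order $n$.

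Next I would apply the inequality $\gamma_{ct}(G) \leq m - \beta_1(G)$ from the previous Proposition with $G = T$. Substituting $m = n-1$ gives
\[ \gamma_{ct}(T) \leq (n-1) - \beta_1(T) = n - \beta_1(T) - 1, \]
which is the first bound claimed. For the asserted equality $n - \beta_1(T) - 1 = \alpha_1(T) - 1$, I would invoke the Gallai-type identity $\alpha_1(G) + \beta_1(G) = |V|$ quoted in the excerpt immediately before the statement. Rearranging gives $\alpha_1(T) = n - \beta_1(T)$, and subtracting $1$ from both sides yields the equality; chaining it with the display above completes the argument.

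The one place I would be careful is the hypothesis $m > 1$ carried over from the invoked Proposition. For a tree this reads $n - 1 > 1$, i.e.\ $n \geq 3$, so the argument as stated covers trees on at least three vertices. This range restriction is genuinely necessary rather than cosmetic: for $T = K_2$ one has $\gamma_{ct}(K_2) = 1$ while $\alpha_1(K_2) - 1 = 0$, so the bound fails, and $K_1$ has no edge cut at all. Apart from noting this exclusion, I expect no real obstacle, since the proof is a direct substitution $m = n-1$ followed by one application of the Gallai identity.
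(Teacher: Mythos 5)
Your proof is correct and follows the same route the paper intends: apply the preceding Proposition with $m = n-1$ for a tree and then use the identity $\alpha_1(G) + \beta_1(G) = n$ to rewrite the bound. Your added observation that the hypothesis $m > 1$ excludes $K_2$ (where $\gamma_{ct}(K_2) = 1 > 0 = \alpha_1(K_2) - 1$) is a genuine refinement the paper glosses over, but it does not change the method.
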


Given the above lower and upper bounds for $\gamma_{ct}(G),$ it is of interest to determine for which classes of graphs any of the three following expressions hold:
\[	(i) \text{ } \gamma'(G) = \gamma_{ct}(G),  \hspace{3ex} (ii) \text{ } \lambda(G) = \gamma_{ct}(G), \hspace{3ex} (iii) \text{ } \gamma_{ct}(G) \leq \alpha_1(G) - 1. \]

Here we look briefly at some classes of graphs where $\gamma'(G) = \gamma_{ct}(G).$ There are of course some trivial cases. For example, if there exists a $\gamma'$-set which contains a cut edge of $G,$ then $\gamma'(G) = \gamma_{ct}(G).$ Note however, that if $\gamma'(G) = \gamma_{ct}(G)$ it is not necessarily the case that there exists a $\gamma'$-set containing a cut edge. For example, $\gamma'(K_{m,n}) = \gamma_{ct}(K_{m,n}),$ but $K_{m,n}$ contains no cut edge. There are of course an infinite number of graphs with the property that there exists a $\gamma'$-set containing a cut edge. It is clear than any tree has this property. More generally, if $G$ is a connected graph containing three adjacent cut edges, then every $\gamma'$-set of $G$ contains a cut edge. For in order to dominate $G,$ an edge set must contain at least one of the three adjacent cut edges. 

The following proposition presents another infinite class of graphs which have a $\gamma'$-set containing a cut edge.

\begin{prop}
	For a graph $G$ defined by $K_n$ and $K_m$ ($m,n > 2$) connected by a path of length $1$ or $2,$ $\gamma_{ct}(G) = \gamma'(G)$ if and only if $m$ or $n$ is even. 
\end{prop}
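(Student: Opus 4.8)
The plan is to lean on the inequality $\gamma'(G)\le\gamma_{ct}(G)$ (which always holds), so that in each direction only the size of the gap must be controlled. The structural observation that drives everything is that in both attachment types the connecting path consists entirely of bridges: if $K_n$ and $K_m$ are joined by a single edge $uv$ (with $u\in K_n$, $v\in K_m$) then $uv$ is a bridge, while if they are joined by $u\!-\!w\!-\!v$ then both $uw$ and $wv$ are bridges. As noted in the discussion preceding the proposition, any $\gamma'$-set that contains a bridge is automatically an edge cut dominating set. Hence for the ``if'' direction it suffices to exhibit a minimum edge dominating set containing one of these bridges (forcing $\gamma_{ct}(G)\le\gamma'(G)$, hence equality), and for the ``only if'' direction I will argue the contrapositive: when both $m$ and $n$ are odd, no edge dominating set of minimum cardinality can disconnect $G$.

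Throughout I would use the characterization that $F$ is an edge dominating set if and only if the set of vertices left unsaturated by $F$ is independent. Since an independent set meets each clique in at most one vertex, dominating the internal edges of a copy of $K_k$ costs $\lfloor k/2\rfloor$ edges, and only $\lfloor(k-1)/2\rfloor$ edges if one prescribed vertex of that clique is already saturated from outside. First I would compute $\gamma'(G)$ by taking, in each clique, a matching leaving at most one vertex unsaturated and treating the bridge(s) separately; this yields $\gamma'(G)$ as a function of the parities of $m,n$ and of the path length, with $\gamma'(G)=\tfrac{n+m-2}{2}$ in the both-odd case. For the ``if'' direction, when $m$ or $n$ is even I would write down an explicit minimum edge dominating set containing a bridge: take the bridge meeting the even clique, use it to pre-saturate the attachment vertex of that clique, and complete with near-perfect matchings inside the two cliques. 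A short parity check shows this set has cardinality exactly $\gamma'(G)$; being a $\gamma'$-set containing a bridge it disconnects $G$, so $\gamma_{ct}(G)\le\gamma'(G)$ and equality follows.

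For the ``only if'' direction assume $m,n$ both odd and let $F$ be any edge cut dominating set; I will show $|F|\ge\gamma'(G)+1$. Since $G-F$ is disconnected, $F$ must contain either (a) a bridge of the connecting path, or (b) an edge cut internal to one of the two cliques, which in $K_k$ uses at least $k-1$ edges. In case (a) the bridge saturates its attachment vertex but contributes nothing to dominating the rest of the cliques; counting the at least $\tfrac{n-1}{2}$ internal edges needed in $K_n$, the at least $\tfrac{m-1}{2}$ needed in $K_m$, and the bridge itself (these edge sets being disjoint) gives $|F|\ge\tfrac{n+m}{2}>\gamma'(G)$. In case (b) an internal cut of, say, the $K_n$-side already uses at least $n-1$ edges and, because $K_n$ is complete, automatically saturates all of $K_n$; relative to the $\tfrac{n-1}{2}$ edges a matching would have spent there this overspends by at least $\tfrac{n-1}{2}\ge 1$, while the other clique still costs its full $\tfrac{m-1}{2}$, so again $|F|>\gamma'(G)$. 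Combining case (a) or (b) with the construction of the ``if'' direction shows $\gamma_{ct}(G)=\gamma'(G)+1$ precisely when $m,n$ are both odd.

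The main obstacle is case (b): ruling out a cheap disconnection that avoids every bridge. The point to get right is that any edge cut internal to a complete graph $K_k$ is forced to have at least $k-1$ edges and, by completeness, to saturate all $k$ of its vertices, so its cost must be compared against the minimum matching cost $\lfloor(k-1)/2\rfloor$ of merely dominating that clique, and the surplus verified to be strictly positive for all $k=n,m\ge 3$ in every parity combination. I expect the only genuinely delicate part to be the bookkeeping that keeps the bridge(s), the two internal matchings, and the possible double-counting of the attachment vertices straight across the length-$1$ and length-$2$ attachments; once this is organized into the three subcases (a bridge on either side, or an internal clique cut), the strict inequality in the both-odd case falls out uniformly, completing the equivalence.
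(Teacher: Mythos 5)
Your proposal is correct, and at its core it runs on the same engine as the paper's proof: parity bookkeeping of near-perfect matchings in the two cliques, together with the fact that a cheap disconnection must use an edge of the connecting path. But the organization is genuinely different, and the difference matters. The paper classifies minimum edge dominating sets by which path edges they contain (its sets $E_1,\dots,E_4$), computes each class minimum as a sum of floor terms, and then reads off $\gamma'(G)$ as the minimum over all classes and $\gamma_{ct}(G)$ as the minimum over the bridge-containing classes, so the equivalence reduces to an arithmetic comparison of these quantities. You instead prove the two implications separately: for the ``if'' direction you exhibit a bridge-containing minimum edge dominating set when $m$ or $n$ is even (in the paper's terms, this is the verification that $|E_2|\le|E_1|$), and for the ``only if'' direction you establish the lower bound $|F|\ge\gamma'(G)+1$ for every edge cut dominating set $F$ when both $m$ and $n$ are odd. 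The real advantage of your route is your case (b): the paper dismisses disconnections that avoid the path in a single sentence (``So a $\gamma_{ct}$-set must contain a cut edge from the path''), justified only by recalling $\gamma_{ct}(K_n)=n-1$ versus $\gamma'(K_n)=\lfloor n/2\rfloor$, whereas you actually prove the required statement: any edge cut internal to $K_k$ has at least $k-1$ edges, saturates every vertex of that clique, and hence exceeds the matching cost $\lfloor (k-1)/2\rfloor$ by a strictly positive amount for all $k\ge 3$, while the other clique still requires its full matching cost. So your argument supplies the justification for precisely the step the paper asserts without proof. The price you pay is heavier bookkeeping: you must check disjointness of the bridge(s), the two clique contributions, and the internal cut in both the length-$1$ and length-$2$ configurations, and you must separately compute $\gamma'(G)$ in each parity case, all of which the paper's classification handles implicitly in its table of $|E_i|$ values. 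Both proofs are sound; yours is the more rigorous, the paper's the more economical.
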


\begin{proof}
	First recall that $\gamma_{ct}(K_n) = n-1,$ but $\gamma'(K_n) = \lfloor n/2 \rfloor.$  So a $\gamma_{ct}$-set must contain a cut edge from the path connecting $K_n$ and $K_m.$ 
	
	Consider the case when $K_m$ and $K_n$ are connected by a single edge, say $e.$ Note that $e$ is a cut edge. Now there are two distinct edge sets to consider. Let $E_1$ be an edge dominating set of minimal cardinality which does not contain $e.$ Let $E_2$ be an edge dominating set of minimal cardinality which does contain $e.$ Given that $\gamma'(K_n) = \lfloor n/2 \rfloor,$ we see that
	\[ |E_1| = \left\lfloor \frac{m}{2} \right\rfloor + \left\lfloor \frac{n}{2} \right\rfloor  \hspace{4ex} |E_2| = 1 + \left\lfloor \frac{m-1}{2} \right \rfloor + \left\lfloor \frac{n-1}{2} \right \rfloor \]
	
	Now we make the observation that $\gamma'(G) = \min\{|E_1|, |E_2|\}$ and $\gamma_{ct}(G) = |E_2|.$ Hence we have $\gamma'(G) = \gamma_{ct}(G)$ if and only if $|E_2| \leq |E_1|.$ It is straightforward to check that $|E_2| \leq |E_1|$ if and only if at least one of $m$ and $n$ is even.

	Next consider the case when $K_m$ and $K_n$ are connected by a path of length two. Let the two edges of the path be denoted by $d$ and $e,$ and say $d$ is incident to a vertex in $K_m$ and $e$ is incident to a vertex in $K_n.$ In this case there are four sets of edges to consider. Let $E_1, E_2, E_3, E_4$ each be edge dominating sets of minimum cardinality which also satisfy the following conditions: 
	\[ d,e \notin E_1,  \hspace{2ex} d \in E_2, e \notin E_2,   \hspace{2ex} d \notin E_3, e \in E_3, \hspace{2ex} d,e \in E_4. \]
	
	Again using the observation that $\gamma'(K_n) = \lfloor n/2 \rfloor$ we have that 
	\[ |E_1| =   \left\lfloor \frac{m}{2} \right \rfloor + \left\lfloor \frac{n}{2} \right \rfloor \hspace{2ex} |E_2| = 1 + \left\lfloor \frac{m-1}{2} \right \rfloor + \left\lfloor \frac{n}{2} \right \rfloor \] \[ |E_3| = 1 + \left\lfloor \frac{m}{2} \right \rfloor + \left\lfloor \frac{n-1}{2} \right \rfloor \hspace{2ex} |E_4| = 2 + \left\lfloor \frac{m-1}{2} \right \rfloor + \left\lfloor \frac{n-1}{2} \right \rfloor \]
	
	It is clear that $\gamma'(G) = \min\{|E_1|, |E_2|, |E_3|, |E_4| \}$ and also that $\gamma_{ct} = \min\{|E_2|, |E_3|, |E_4|\}.$ Hence $\gamma'(G) = \gamma_{ct}(G)$ if and only if $|E_i| \leq |E_1|$ for some $i \in \{2,3,4\}.$ Again a straightforward check shows that this occurs if and only if at least one of $m$ and $n$ is even. 	
\end{proof}

\section{Introduction of New Parameters}

In the mid 1970s Cockayne and Hedetniemi \cite{Cockayne1, Cockayne2} noted the following chain of inequalities for any graph $G$:
\[  \text{ir}(G) \leq \gamma(G) \leq i(G) \leq \beta(G) \leq \Gamma(G) \leq \text{IR}(G). \] 

Here $\Gamma(G),$ the \textit{upper domination number}, is the maximum cardinality taken over all minimal dominating sets of $G.$ The \textit{independent domination number} and \textit{independence number}, $i(G)$ and $\beta(G),$ are respectively the minimum and maximum cardinalities taken over all maximal sets of independent vertices of $G.$ Similarly, $\text{ir}(G)$ and $\text{IR}(G),$ the \textit{lower} and \textit{upper irredundance numbers}, are respectively the minimum and maximum cardinalities taken over all maximal irredundant sets of vertices of $G.$ 

The introduction and study of new parameters often involves a similar chain of inequalities as the one above. Indeed, this inequality chain has been instrumental in the research of many parameters. In what follows we establish such a chain relating parameters which we define corresponding to edge cut domination, edge cut irredundance, and edge cut independence.

\begin{defin}
	An edge cut dominating set $F$ is a \textit{minimal edge cut dominating set} if for any edge $e$ in $F$ either
	\begin{enumerate} 
		\item $F-\{e\}$ is not an edge dominating set, or 
		\item $F-\{e\}$ is not an edge cut.
	\end{enumerate}
\end{defin} 

\begin{defin} Let $G = (V,E)$ be a graph. Then 
	\[ \Gamma_{ct}(G) = \max\{ |F| : F \text{ is a minimal edge cut dominating set}\}. \]
\end{defin}

Since a minimal edge cut dominating set is first and foremost an edge cut dominating set, it is clear that $\gamma_{ct}(G) \leq \Gamma_{ct}(G).$ 


\begin{defin} Let $G=(V,E)$ be a graph and $F \subseteq E.$ Then
	an edge $e \in F$ has a \textit{private neighbor} with respect to $F$ if either
	\begin{enumerate}
		\item $e$ is an independent edge in $F,$ or 
		\item $\exists$ $e' \notin F$ such that $e'$ is adjacent to $e$ and no other edges of $F.$	
	\end{enumerate} 
\end{defin} 

\begin{defin} Let $G = (V,E)$ be a graph, and $F \subseteq E.$ Then
	an edge $e \in F $ is \textit{irredundant} if $e$ has a private neighbor with respect to $F.$ If each edge in $F$ is irredundant, then we say $F$ is irredundant.
\end{defin}

\begin{defin} 
	Let $G=(V,E)$ be a graph. Then $F \subseteq E$ is an \textit{edge cut irredundant set} if for every edge $e \in F$ either
	\begin{enumerate}
		\item $e$ is irredundant, or 
		\item $F-\{e\}$ is not an edge cut.
	\end{enumerate} 
\end{defin}

\begin{defin} 
	An edge cut irredundant set $F$ is called \textit{maximal} if $F\bigcup \{e\}$ is not edge cut irredundant for every $e \in E- F.$	
\end{defin}

\begin{defin} Let $G=(V,E)$ be a graph. Then
 \begin{itemize}
 	\item $\text{ir}_{ct} = \min\{|F| : F \text{ is a maximal edge cut irredundant set}\},$ and 
 	\item $\text{IR}_{ct}= \max\{ |F| : F \text{ is a maximal edge cut irredundant set}\}.$ 
 \end{itemize}
\end{defin}

\begin{prop}\label{min=max}
	A minimal edge cut dominating set $F$ is a maximal edge cut irredundant set.
\end{prop}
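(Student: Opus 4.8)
The plan is to verify the two assertions packaged in the claim: first, that a minimal edge cut dominating set $F$ is itself an edge cut irredundant set, and second, that no proper superset $F\cup\{e\}$ with $e\in E-F$ is edge cut irredundant. Throughout I would lean on the two properties built into the hypothesis, namely that $F$ is an edge dominating set and that $F$ is an edge cut.

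For the first assertion I would fix an arbitrary edge $e\in F$ and invoke minimality, which forces one of two cases. If $F-\{e\}$ is not an edge cut, then condition (2) in the definition of an edge cut irredundant set holds for $e$ immediately. If instead $F-\{e\}$ fails to be an edge dominating set, then some edge $f$ is dominated by $F$ but not by $F-\{e\}$; that is, $f\notin F-\{e\}$ and $f$ is adjacent to no edge of $F-\{e\}$. I would then split on whether $f=e$. When $f=e$, the edge $e$ is adjacent to no other edge of $F$, so $e$ is an independent edge in $F$ and is irredundant by clause (1) of the private-neighbor definition. When $f\neq e$, we have $f\notin F$, and since $F$ dominates $f$ while $F-\{e\}$ does not, the only edge of $F$ adjacent to $f$ is $e$ itself; thus $f$ is a private neighbor of $e$ and $e$ is irredundant by clause (2). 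In every case $e$ is either irredundant or witnesses that $F-\{e\}$ is not an edge cut, so $F$ is edge cut irredundant.

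For the second assertion I would take any $e\in E-F$ and examine the edge $e$ inside $F\cup\{e\}$. Removing it returns $(F\cup\{e\})-\{e\}=F$, which is an edge cut by hypothesis, so to defeat edge cut irredundance of $F\cup\{e\}$ it suffices to show $e$ is not irredundant with respect to $F\cup\{e\}$. Clause (1) fails because $e$, lying outside the edge dominating set $F$, must be adjacent to some edge of $F$ and hence is not independent. Clause (2) fails for the same reason applied to any candidate private neighbor $e'\notin F\cup\{e\}$: such an $e'$ lies outside $F$, so $F$ dominates it, meaning $e'$ is adjacent to some edge of $F$, which contradicts the requirement that $e'$ be adjacent to $e$ alone. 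Hence $e$ is non-irredundant while $F=(F\cup\{e\})-\{e\}$ remains an edge cut, so $F\cup\{e\}$ is not edge cut irredundant, establishing maximality.

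The only real subtlety, and the step I would check most carefully, is the repeated use of the edge domination property in the second part: it is precisely the fact that every edge outside $F$ is already dominated by $F$ that forbids $e$ (or any external $e'$) from serving as a private neighbor avoiding all of $F$. The case analysis in the first part is routine once the correct witness edge $f$ is identified, so I expect the bookkeeping there, rather than any genuine difficulty, to be the main thing to get right.
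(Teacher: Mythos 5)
Your proof is correct and follows essentially the same two-part structure as the paper's: use minimality of $F$ to show each edge of $F$ is irredundant or essential to the cut, then show any added edge $e \in E-F$ fails to be irredundant in $F\cup\{e\}$ because $F$ already dominates everything outside it. Your version is actually slightly more careful than the paper's in two spots -- the explicit case split ($f=e$ versus $f\neq e$) justifying why non-domination yields a private neighbor, and the explicit observation that $(F\cup\{e\})-\{e\}=F$ remains an edge cut -- but these are elaborations of the same argument, not a different route.
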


\begin{proof}
	Let $F$ is a minimal edge cut dominating set. Now for any edge $e \in F,$ either $F - \{e\}$ is not an edge dominating set, which means that $e$ has a private neighbor with respect to $F,$ i.e., $e$ is irredundant; or $F-\{e\}$ is not an edge cut. So $F$ is an edge cut irredundant set. Now let $e \in E-F$ and consider $F\bigcup\{e\}.$ Since $F$ is an edge dominating set, we know that $F \bigcup \{e\}$ is also an edge dominating set. But this implies that $e$ has no private neighbor with respect to $F \bigcup \{e\}.$ For $e$ cannot be independent in $F \subseteq F\bigcup\{e\},$ and every edge adjacent to $e$ must also be adjacent to some edge in $F.$ It follows that $e$ is not irredundant in $F\bigcup \{e\}.$ Hence $F\cup\{e\}$ is not an edge cut irredundant set for any $e \in E-F.$ Thus $F$ is a maximal edge cut irredundant set.  
\end{proof}

Note: A maximal edge cut irredundant set is not necessarily an edge cut dominating set. This can be seen in the following figure.\\

\begin{center}
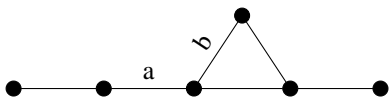

	\begin{tikzpicture}
	\node[vertex] (0) {};	
	\node[vertex, right= 1cm of 0] (1) {}
	edge[edge] node {} (0);
	\node[vertex, right= 1cm of 1] (2) {}
	edge[edge] node[above] {a} (1);
	\node[vertex, above right=.83cm and .5cm of 2] (3) {}
	edge[edge] node[text style, above] {b} (2);
	\node[vertex, below right= .83cm and .5cm of 3] (4) {}
	edge[edge] node {} (3)
	edge[edge] node {} (2);
	\node[vertex, right= 1cm of 4] {}
	edge[edge] node {} (4);
	\end{tikzpicture}
	\captionof{figure}{The edge set $\{a,b\}$ is a maximal edge cut irredundant set but not an edge cut dominating set.}
\end{center}

\begin{defin} Let $G=(V,E)$ be a graph, and $F \subseteq E.$ Then
	F is an \textit{edge cut independent set} if for every edge $e \in F$ either
	\begin{enumerate}
		\item $e$ is independent in $F,$ or 
		\item $F-\{e\}$ is not an edge cut.
	\end{enumerate} 
\end{defin}

\begin{defin}
	An edge cut independent set $F$ is called \textit{maximal} if $F\bigcup \{e\}$ is not an edge cut independent set for every edge $e \in E-F.$
\end{defin} 

\begin{defin} Let $G=(V,E)$ be a graph. Then
	\begin{itemize}
		\item $i_{ct}(G) = \min\{ |F| : F \text{ is maximal edge cut independent set}\},$ and 
		\item $\beta_{ct}(G) = \max\{ |F| : F \text{ is maximal edge cut independent set}\}.$
		
	\end{itemize} 
\end{defin}



\begin{prop}\label{prop1}
	A maximal edge cut independent set is a minimal edge cut dominating set. 
\end{prop}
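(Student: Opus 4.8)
The plan is to mirror the classical argument that a maximal independent set is a minimal dominating set, carried out at the level of edges and adapted to the cut condition. Writing $F$ for the given maximal edge cut independent set, I would establish in turn that (i) $F$ is an edge cut, i.e.\ $\langle V, E-F\rangle$ is disconnected, (ii) $F$ is an edge dominating set, so that with (i) it is an edge cut dominating set, and (iii) $F$ is minimal among edge cut dominating sets. Steps (i) and (ii) should come from the maximality of $F$, while step (iii) should come from the near-independence built into the definition.

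The cleanest step is (iii), so I would dispatch it first, granting (i) and (ii). Fix $e \in F$. Since $F$ is edge cut independent, either $e$ is independent in $F$ or $F-\{e\}$ is not an edge cut. In the first case no edge of $F-\{e\}$ is adjacent to $e$, so $e$ is an edge outside $F-\{e\}$ with no neighbor in it; hence $F-\{e\}$ fails to dominate $e$ and is not an edge dominating set. In the second case $F-\{e\}$ is not an edge cut by hypothesis. Either way one of the two defining failures of a minimal edge cut dominating set holds at $e$, so $F$ is minimal. This is the exact analogue of the observation that each vertex of an independent dominating set is its own private neighbor, and it requires no serious bookkeeping.

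For (i) I would argue from maximality by contradiction. If $\langle V, E-F\rangle$ were connected, choose a spanning tree $T'$ of it; for any edge $e \in (E-F)\setminus T'$ the set $F\cup\{e\}$ still has connected complement, since it still contains $T'$, so every edge of $F\cup\{e\}$ satisfies the second clause of edge cut independence and $F\cup\{e\}$ is (vacuously) edge cut independent, contradicting maximality. This handles every case except $E-F$ being itself a spanning tree, which must be treated separately. For (ii) I would again use maximality: if an edge $e_0$ is not dominated by $F$, then $e_0$ shares no vertex with any edge of $F$, so adjoining $e_0$ changes the independence status of no edge of $F$ and leaves $e_0$ independent in $F\cup\{e_0\}$; the goal is then to conclude that $F\cup\{e_0\}$ is still edge cut independent and thereby contradict maximality.

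The main obstacle is this last step, and specifically the interaction between adjoining $e_0$ and the cut clause for the \emph{non-independent} edges of $F$. For such an edge $f$ one knows $F-\{f\}$ is not an edge cut, but to keep $F\cup\{e_0\}$ edge cut independent one needs $(F-\{f\})\cup\{e_0\}$ also to fail to be an edge cut, i.e.\ $e_0$ must not be a bridge of $\langle V,(E-F)\cup\{f\}\rangle$. This can genuinely fail, the extreme case being when $e_0$ is a bridge of $G$ such as a pendant edge, in which case adjoining $e_0$ destroys the cut-essentiality of $f$ and maximality is not contradicted; the leftover spanning-tree case in (i) is delicate for the same reason. I therefore expect the honest resolution to require either an extra hypothesis on $G$ (bridgelessness, or higher edge connectivity) or a considerably more careful selection argument than the vertex analogue suggests, and pinning this down is where the real work lies.
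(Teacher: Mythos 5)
Your step (iii) coincides with the paper's closing argument and is correct; but the obstacle you isolate in steps (i) and (ii) is not a shortcoming of your bookkeeping that a cleverer selection argument could remove: the proposition as stated is false. Take $G$ to be the bowtie, two triangles $uav$ and $vbw$ sharing the vertex $v$, and take $F=\{uv,vw\}$. Then $E-F$ is the spanning path $u,a,v,b,w$, so $F$ is not an edge cut, and consequently every edge of $F$ satisfies clause (2) of edge cut independence automatically ($F-\{f\}$ cannot be an edge cut when the larger set $F$ is not); thus $F$ is edge cut independent. It is also maximal: for instance, in $F\cup\{ua\}$ the edge $vw$ is not independent and $(F\cup\{ua\})-\{vw\}=\{uv,ua\}$ is an edge cut (its removal isolates $u$), and the cases $av$, $vb$, $bw$ are symmetric. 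So $F$ is a maximal edge cut independent set that is not even an edge cut, hence not an edge cut dominating set, let alone a minimal one. Note that the bowtie is bridgeless, so your proposed hypothesis of bridgelessness would not repair step (i). Your pendant-edge scenario for step (ii) is realizable as well: in the triangle $v_1v_2v_3$ with a pendant path $v_3v_4v_5$ attached, the set $F=\{v_1v_2,v_2v_3\}$ is a maximal edge cut independent set and \emph{is} an edge cut (it isolates $v_2$), yet it fails to dominate the edge $v_4v_5$; so even restricting the statement to sets that are themselves edge cuts does not salvage it.

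For comparison, the paper's proof commits exactly the two oversights you declined to commit. To contradict maximality it verifies only that the newly adjoined edge $e$ is independent in $F\cup\{e\}$, and it never checks that the non-independent edges of $F$ still satisfy clause (2) after $e$ is added --- precisely the interaction you flagged, and precisely where both examples above break the argument ($vw$ in the bowtie, $v_1v_2$ in the pendant example). It also never establishes that $F$ is an edge cut at all: the proof passes from ``edge dominating set'' to ``edge (cut) dominating set'' with nothing but a parenthesis. So your diagnosis, not the paper's proof, is the correct conclusion here: the proposition needs either a genuinely stronger hypothesis or a weaker conclusion, and no completion of the maximality argument along these lines can exist, since the examples satisfy every stated hypothesis and violate the stated conclusion.
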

 
\begin{proof}
	Let $F$ be a maximal edge cut independent set. We first show that $F$ is an edge dominating set. For suppose this is not the case. Then there exists some $e \in E-F$ such that $e$ is not adjacent to any edge in $F.$ But this implies that $e$ is independent in $F\bigcup \{e\},$ which contradicts the maximality of $F.$ Hence $F$ is an edge (cut) dominating set. Now we show that $F$ is a minimal edge cut dominating set. Let $e \in F,$ then either $e$ is independent in $F,$ or $F-\{e\}$ is not an edge cut. If the latter is true, we are done. On the other hand, if $e$ is independent in $F,$ then $e$ is not adjacent to any edge in $F,$ which implies that $F-\{e\}$ is not an edge dominating set. Therefore, $F$ is a minimal edge cut dominating set. 
\end{proof}

Note: A minimal edge cut dominating set is not necessarily a maximal edge cut independent set. This is shown in the following figure.

\begin{center}
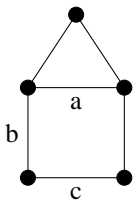

	\begin{tikzpicture}
	\node[vertex] (0) {};	
	\node[vertex, below right= .83cm and .5cm of 0] (1) {}
	edge[edge] node {} (0);
	\node[vertex, below left= .83cm and .5cm of 0] (2) {}
	edge[edge] node {} (0)
	edge[edge] node[below] {a} (1);
	\node[vertex, below = 1cm of 1] (3) {}
	edge[edge] node {} (1);
	\node[vertex, below = 1cm of 2] (4) {}
	edge[edge] node[left] {b} (2)
	edge[edge] node[below] {c} (3);
	\end{tikzpicture}
	\captionof{figure}{The edge set $\{a,b,c\}$ is a minimal edge cut dominating set but not an edge cut independent set.}
\end{center}

\begin{thm}
	For any connected graph $G,$ \[  \text{ir}_{ct}(G) \leq \gamma_{ct}(G) \leq i_{ct}(G) \leq \beta_{ct}(G) \leq \Gamma_{ct}(G) \leq \text{IR}_{ct}(G).  \]
\end{thm}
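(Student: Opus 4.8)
The plan is to verify the chain one inequality at a time, leaning almost entirely on the two structural results already established: Proposition \ref{min=max} (every minimal edge cut dominating set is a maximal edge cut irredundant set) and Proposition \ref{prop1} (every maximal edge cut independent set is a minimal edge cut dominating set). The only extra ingredient I need is the elementary observation that a $\gamma_{ct}$-set, being an edge cut dominating set of minimum cardinality, is necessarily a \emph{minimal} edge cut dominating set: if some edge could be deleted while preserving both the domination and the edge-cut properties, the result would be a strictly smaller edge cut dominating set, contradicting minimality of cardinality.

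For the two outer inequalities I would invoke Proposition \ref{min=max}. To get $\text{ir}_{ct}(G) \le \gamma_{ct}(G)$, note that a $\gamma_{ct}$-set is a minimal edge cut dominating set, hence a maximal edge cut irredundant set, so its cardinality $\gamma_{ct}(G)$ is at least the minimum $\text{ir}_{ct}(G)$ taken over such sets. To get $\Gamma_{ct}(G) \le \text{IR}_{ct}(G)$, take a minimal edge cut dominating set $F$ realizing $\Gamma_{ct}(G)$; by Proposition \ref{min=max} it is a maximal edge cut irredundant set, whence $\Gamma_{ct}(G) = |F| \le \text{IR}_{ct}(G)$.

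For the two inequalities flanking the center I would use Proposition \ref{prop1}. For $\gamma_{ct}(G) \le i_{ct}(G)$, a maximal edge cut independent set $F$ realizing $i_{ct}(G)$ is, by Proposition \ref{prop1}, a minimal edge cut dominating set, in particular an edge cut dominating set, so $\gamma_{ct}(G) \le |F| = i_{ct}(G)$. For $\beta_{ct}(G) \le \Gamma_{ct}(G)$, a maximal edge cut independent set $F$ realizing $\beta_{ct}(G)$ is a minimal edge cut dominating set by the same proposition, so $\beta_{ct}(G) = |F| \le \Gamma_{ct}(G)$. The central inequality $i_{ct}(G) \le \beta_{ct}(G)$ is then immediate, since both quantities are the minimum and maximum of $|F|$ over the \emph{same} nonempty family of maximal edge cut independent sets; nonemptiness follows by greedily extending the empty set, which is vacuously edge cut independent, to a maximal one.

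The individual steps are short, so I do not anticipate a serious obstacle. The only points deserving explicit care are (i) recording that a minimum edge cut dominating set is minimal, which is what licenses the first inequality, and (ii) confirming that each relevant family of sets is nonempty so that every $\min$ and $\max$ in the statement is well defined; the existence of $\gamma_{ct}$-sets noted in the introduction, together with the greedy extension argument, covers this. Everything else is a direct substitution into Propositions \ref{min=max} and \ref{prop1}.
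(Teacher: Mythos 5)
Your proof is correct and takes essentially the same route as the paper: both derive the outer inequalities $\text{ir}_{ct}(G) \leq \gamma_{ct}(G) \leq \Gamma_{ct}(G) \leq \text{IR}_{ct}(G)$ from Proposition \ref{min=max} and the inner ones $\gamma_{ct}(G) \leq i_{ct}(G) \leq \beta_{ct}(G) \leq \Gamma_{ct}(G)$ from Proposition \ref{prop1}, then concatenate the two chains. If anything, you are more careful than the paper, which leaves implicit the two points you rightly flag --- that a minimum-cardinality edge cut dominating set is a \emph{minimal} one, and that each family over which a $\min$ or $\max$ is taken is nonempty.
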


\begin{proof}
	From Proposition $\ref{min=max}$ and the following note, we see that \[ \text{ir}_{ct}(G)  \leq \gamma_{ct}(G) \leq \Gamma_{ct}(G) \leq \text{IR}_{ct}(G). \]
	
	From Proposition \ref{prop1} and the following note, we also see that \[ \gamma_{ct}(G) \leq i_{ct}(G) \leq \beta_{ct}(G) \leq \Gamma_{ct}(G).\]
	
	Hence the desired inequality chain holds. 
\end{proof}

\centerline{}

\end{document}